\documentclass{article}
\usepackage[utf8]{inputenc}
\newcounter{defn}
\newcounter{rmk}

\newcounter{conj}

\usepackage{amsmath}
\usepackage{float}

\usepackage{comment}
\usepackage[linktocpage=true]{hyperref}
\usepackage[margin=1.2 in, top=1 in, bottom= 1.2 in]{geometry}

\usepackage[english]{babel}
\usepackage{amsfonts}
\usepackage{mathrsfs}
\usepackage[mathscr]{euscript}
\usepackage{bbm}
\usepackage{latexsym}
\usepackage{math dots}
\usepackage{amssymb}
\usepackage{mathtools}
\usepackage{graphicx}
\usepackage{tikz}
\usepackage{placeins}

\usepackage{enumitem}
\setlist{nolistsep}
\usepackage{amsthm}
\usepackage{relsize}
\usepackage{nicefrac}
\usepackage[capitalize]{cleveref}

\newtheoremstyle{plain}{3mm}{3mm}{\slshape}{}{\bfseries}{.}{.5em}{}
\newtheoremstyle{definition}{2mm}{2mm}{}{}{\bfseries}{.}{.5em}{}
\theoremstyle{plain}
	
\newtheorem{theorem}{Theorem}

\newtheorem{conjecture}[conj]{Conjecture}

\theoremstyle{definition}
\newtheorem{definition}[defn]{Definition}
\newtheorem{remark}[rmk]{Remark}

\theoremstyle{plain}
\newtheorem*{namedthm}{\namedthmname}
\newcounter{namedthm}
	

\newcommand{\R}{\mathbb{R}}

\newcommand{\D}{\mathcal{D}}

\newcommand{\eps}{\epsilon}

\title{Weak geometric lemma for ADR boundary of a uniform domain implies uniform rectifiability.}
\author{Aritro Pathak}

\begin{document}

\maketitle

\begin{abstract}
We resolve a conjecture of Guy David and Svitlana Mayboroda. We show that if 
$\Omega \subset \mathbb{R}^n$ is a uniform domain with $(n-1)$-dimensional 
Ahlfors–David regular boundary $E = \partial \Omega$, and if $E$ satisfies the 
Weak Geometric Lemma, then $E$ satisfies the Bilateral Weak Geometric Lemma. 
In particular, $E$ is uniformly rectifiable.
\end{abstract}

\section{Introduction}

The following conjecture was posed by Guy David and Svitlana Mayboroda in Remark 6.10 of \cite{DM22}. 

\begin{conjecture}\label{conj}
Let $\Omega \subset \mathbb{R}^n$ be a domain with boundary 
$E = \partial \Omega$. Assume that $E$ is $(n-1)$-dimensional 
Ahlfors–David regular, and that $\Omega$ is a uniform domain. 
If $E$ satisfies the Weak Geometric Lemma, then $E$ satisfies 
the Bilateral Weak Geometric Lemma, and hence is uniformly rectifiable.
\end{conjecture}

By Corollary 2.10 of \cite{DS2}, the Bilateral Weak Geometric Lemma is equivalent to uniform rectifiability of the Ahlfors-David regular boundary. Note that no assumptions are made on the existence of exterior Harnack chains 
or exterior corkscrew points for $\Omega$.

In this paper, we prove the following.

\begin{theorem}\label{thm1}
Conjecture~\ref{conj} is true.
\end{theorem}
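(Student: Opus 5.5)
We must show: for every $\eps>0$, the set $\mathcal{B}(\eps)$ of pairs $(x,r)\in E\times(0,\operatorname{diam}E)$ for which the bilateral flatness number $b\beta(x,r)$ exceeds $\eps$ is a Carleson set. The WGL already makes the set of pairs with $\beta_\infty(x,r)>\delta$ Carleson for every $\delta>0$. So it suffices to show the following: for each $\eps$ there is $\delta=\delta(\eps,n,C_{ADR},C_{unif})$ such that whenever $\beta_\infty(x,A r)\le\delta$ (with a fixed large dilation $A$), we have $b\beta(x,r)\le\eps$. Carleson sets are stable under bounded dilation of the scale, so this pointwise implication gives the theorem, and uniform rectifiability then follows from Corollary 2.10 of \cite{DS2} as noted above. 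The plan is therefore a compactness argument.

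\textbf{Step 1 (unilateral flatness forces a thin slab).} If $\beta_\infty(x,Ar)\le\delta$, then $E\cap B(x,Ar)$ lies in a $\delta Ar$-neighborhood of a hyperplane $P$ through $x$. Hence $B(x,Ar)\setminus\{\operatorname{dist}(\cdot,P)\le\delta Ar\}$ has two components $U^\pm$, and each $U^\pm$ lies either entirely in $\Omega$ or entirely in $\R^n\setminus\overline\Omega$.

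\textbf{Step 2 (use the uniform domain).} Interior corkscrew points exist at all scales, so at least one half, say $U^+$, lies in $\Omega$. Then:
\begin{enumerate}
\item[(a)] If $U^-\subset\Omega$ as well, ADR is violated. Both sides belong to $\Omega$ and $E$ is a thin set inside the slab.
\item[(b)] If $U^-\cap\Omega=\emptyset$, then $U^-$ is an exterior region. We must then show that $E$ has no large holes in $B(x,r)\cap P$: every point of $P\cap B(x,r)$ should be within $\eps r$ of $E$.
\end{enumerate}

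\textbf{Step 3 (the main obstacle).} Suppose (b) holds and $E$ has a hole, i.e.\ a ball $B(y,\eps r)$ with $y\in P$ that misses $E$. Then $B(y,\eps r)$ connects $U^+$ and $U^-$ without crossing $E$, so it lies in one component of $\R^n\setminus E$. This forces $U^-$ into $\Omega$ or $U^+$ out of $\Omega$, a contradiction. The whole difficulty is scale: the hole is $\eps r$ while the slab has width only $\delta A r\ll\eps r$, so the argument goes through once $\delta A<\eps/10$. The same reasoning settles case (a): if $U^-\subset\Omega$, use ADR with a lower bound on $\mathcal H^{n-1}(E\cap B(x,r))$. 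But $E$ may still genuinely be a near-hyperplane with both sides in $\Omega$ (e.g.\ $\Omega=\R^n\setminus P$). This is not uniform in the Harnack-chain sense across $P$ unless there are holes, and holes are excluded by a density argument. The cleanest way to run it is to compare with blow-up limits: take a sequence violating the implication, rescale to $B(0,1)$, and pass to limits of the ADR sets $E_k$ (Hausdorff convergence, with ADR preserved) and of the uniform domains $\Omega_k$ (uniformity is preserved in the limit with the same constants). The limit has $E_\infty\cap B(0,A)\subset P$, is ADR, and is the boundary of a uniform domain. Either $E_\infty\supset P\cap B(0,A/2)$, which contradicts $b\beta>\eps$ passing to the limit; or $E_\infty$ omits a ball of $P$, in which case the hole argument above contradicts the limit being a boundary with $\Omega_\infty$ on only one side. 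The delicate point I expect to have to check carefully is that limits of uniform domains with ADR boundaries keep $\partial\Omega_\infty=E_\infty$, and in particular that interior corkscrews do not degenerate.
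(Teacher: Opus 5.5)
Your overall architecture is the paper's. It consists of a pointwise implication (small $\beta$ at a pair forces small $b\beta$ there), the observation that each side $U^\pm$ of the thin slab lies wholly in $\Omega$ or wholly in the exterior, a corkscrew to put one side in $\Omega$, and a hole joining the two sides to rule out the mixed case (b). That part is fine and corresponds to the paper's Case (i).

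The gap is case (a), where both sides lie in $\Omega$; this is exactly where uniformity must be used.

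\begin{itemize}
\item Your stated reason (``$E$ is a thin set inside the slab'') does not violate ADR. Take $\Omega=\R^n\setminus(P\cap\overline{B(0,1)})$, $x=0$, $Ar<1$: both sides lie in $\Omega$, $E$ is flat and ADR, and only the Harnack chain condition fails.
\item The reasoning of Step 3 does not transfer, since a hole joining two interior sides contradicts nothing.
\item Your blow-up paragraph only reaches a contradiction when $\Omega_\infty$ lies on one side.
\item Holes are not ``excluded by a density argument''. A single hole is exactly what $b\beta>\eps$ hands you, and ADR cannot exclude it. What ADR excludes is holes in every subregion at every scale.
\end{itemize}

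Producing those holes from the Harnack chains is the whole content of the paper's Case (ii), and your proposal never supplies it.

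The missing argument runs as follows.
\begin{itemize}
\item Take $z\in P$ near $x$ and a scale $s$ with $\delta Ar\ll\alpha s\le r$. The points $z\pm\alpha s e_n$ lie in $\Omega$ at distance between $\alpha s/2$ and $2\alpha s$ from $E$; if the distance is larger, $B(z,\alpha s)$ is already a hole.
\item Hence $\Pi\le 4$, and the chain joining them has at most $4M$ balls. All have diameter comparable to $\alpha s$, with constants depending only on the Harnack constants.
\item For $\alpha$ small (depending only on those constants) the chain stays in $B(z,s/4)$. A ball of the chain meeting $P$ yields $p\in P\cap B(z,s/4)$ with $\operatorname{dist}(p,E)\ge c\,\alpha s$.
\item So every dyadic cube of side $s$ in $P$ near $x$ contains a subcube of side $2^{-(j+1)}s$, with $j$ fixed, over which the slab misses $E$.
\item Remove these over generations $s=2^{-m(j+1)}r$, $m=0,\dots,k$. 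What remains are cubes of total measure $(1-2^{-(j+1)(n-1)})^{k+1}$ times that of the initial cube of side $\approx r$, and their thin cylinders cover $E\cap C(x,r)$.
\item Applying the ADR upper bound to each cylinder contradicts the ADR lower bound for $C(x,r)$ once $k=N_0(C_{\mathrm{ADR}},n,M)$. This is legitimate only if $\delta A\ll\alpha\,2^{-N_0(j+1)}$, so $\delta$ must be chosen after $N_0$.
\end{itemize}

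In your blow-up version, the same crossing argument shows $E_\infty\cap P$ is porous in $P$. It is then Lebesgue-null by the density theorem, contradicting ADR of $E_\infty$.

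One of these two arguments must be supplied. The compactness of uniform domains that you flag is secondary by comparison, and the direct argument avoids it entirely.
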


The theory of uniform rectifiability was studied in \cite{DS91,DS2}. The uniform rectifiability property (See \cref{defur}) for an a-priori Ahlfors-David regular set of codimension one (See \cref{defadr}) was shown in \cite{DS91} to be a sufficient condition for the $L^2$ integrability of all singular integral operators with odd, smooth kernels, the Riesz transform being one particular instance of such a singular integral operator. Later, the work of Nazarov-Tolsa-Volberg \cite{NTV12} also showed that uniform rectifiability of the a-priori Ahlfors-David regular boundary $E$ of codimension one, is implied by the $L^2(E)$ boundedness of the $(n-1)$ dimensional Riesz transform.  

Over the last few decades, there has also been a series of works on the equivalence of the regularity properties of the harmonic measure on rough boundaries, and the rectifiability properties of the a-priori Ahlfors-David regular boundary. It has been shown in \cite{HMMTZ21} that the weak $A_\infty$ property of elliptic measure for elliptic operators close to the Laplacian in a Carleson measure sense, in uniform domains, implies the uniform rectifiability of the boundary. \footnote{See \cite{HMMTZ21} for prior references as well, on the correspondence of absolute continuity of the harmonic measure versus just rectifiability of the boundary of a-priori 1 sided NTA domains.} The result of the present manuscript shows that under the background assumptions of quantifiable openness and quantifiable path connectedness of the domain, and a-priori regularity of certain measures supported on the boundary such as the harmonic measure, if one establishes the Weak Geometric Lemma, that is enough to conclude the uniform rectifiability of the boundary.

In \cite{DM22}, for the codimension one case, it is shown that if in a Carleson prevalent sense, the Green function is approximated by affine planes, then the boundary is uniformly rectifiable. Specifically in the proof of Theorem 6.1 in \cite{DM22}, using the affine approximation of the Green function by affine planes and the boundary Holder regularity of the Green function, one deduces the weak Geometric lemma. The result of this present paper shows that this immediately implies the boundary is uniformly rectifiable.

We state all the definitions of all the components of this conjecture. These can all be found, for example, in \cite{DS2} or in Section 2 of \cite{HMMTZ21}.

Throughout, let \(E \subset \mathbb{R}^{n}\) be an \(n\)-ADR set. Let \(\mathcal{D}(E)\) be a David--Semmes dyadic grid on \(E\). For \(Q \in \mathcal{D}(E)\), denote by \(\ell(Q)\) its side length, by \(x_Q\) the fixed center so that,
\[
B_Q := B(x_Q, C\,\ell(Q)),
\]
where \(C \) is a fixed constant.

We now define the Ahlfors-David regular boundary of codimension one, as well as interior corkscrews and interior Harnack chains.

\begin{definition}\label{defadr}(Ahlfors-David regular boundary)\label{def 2}
    The boundary $\partial\Omega$ is called Ahlfors-David $(n-1)$-regular, ($(n-1),C_1$ AD-regular) if
$\sigma = \mathcal{H}^{n-1}\!\!\mid_{\partial\Omega}$, then
\begin{equation}\label{adr}
C_{1}^{-1} r^{n-1} \le \sigma(B(x,r)) \le C_{1} r^{n-1}
\quad \text{for all } x \in \partial\Omega,\; 0 < r < \operatorname{diam}(\partial\Omega).\end{equation}

Here, we have $\sigma(B(x,r)):=\sigma(B(x,r)\cap \partial\Omega)$.

\end{definition}

\begin{definition}[Interior Corkscrews]\label{def1}
      An open set $\Omega\subset \R^{n}$ satisfies the interior corkscrew condition if for some uniform constant $c$ with $0<c<1$, and for every $x\in \partial \Omega$ and $0<r<\text{diam}\partial\Omega$, there is a ball $B(A(x,r),cr)\subset \Omega\cap B(x,r)$. The point $A(x,r)\in \Omega\cap B(x,r)$
 is called an interior corkscrew point relative to $\Delta$.
 \end{definition}

\begin{definition}[Interior Harnack Chains]\label{def3}
An open connected set $\Omega \subset \mathbb{R}^n$ is said to satisfy the 
Harnack chain condition with constants $M, C_1 > 1$ if for every pair of points 
$A, A' \in \Omega$ there is a chain of balls 
$B_1, B_2, ...., B_K \subset \Omega$ with 
\[
K \leq M\bigl(2 + \log_2^+ \Pi\bigr)
\]
that connects $A$ to $A'$, where
\begin{equation}\label{eq:Pi}
\Pi := \frac{|A - A'|}{\min\{\delta(A), \delta(A')\}}.
\end{equation}
Namely, $A \in B_1$, $A' \in B_K$, $B_k \cap B_{k+1} \neq \emptyset$ for every 
$1 \leq k \leq K$, and
\begin{equation}\label{eq:chain}
C_1^{-1} \,\mathrm{diam}(B_k) \leq \mathrm{dist}(B_k, \partial\Omega) 
\leq C_1 \,\mathrm{diam}(B_k).
\end{equation}

Alternately, the Harnack chain condition asserts there exists a uniform positive constant $M_0$ so that any two points $A,A' \in \text{int}\Omega$ can be connected by a non tangential path $\gamma$ with the property that for any point $z\in \gamma$, we have 
\begin{align}\label{eq4}
    \text{ min }(l_\gamma(A,z),l_\gamma(A',z))< M_0 \delta(z).
\end{align}
\end{definition}

Here, $\delta(z)$ denotes the distance of the point $z$ from the boundary $\partial\Omega$ and $l_\gamma(A,z)$ is the length along this path, between the points $A$ and $z$. 

This is a quantifiable path connectedness condition for $\Omega$; the interior corkscrew condition is a quantifiable openness condition for the set $\Omega$.

\begin{definition}[Uniform domain]\label{def4}
    An open set $\Omega\subset \R^n$ is a uniform domain if it satisfies the interior Harnack chains condition as well as the interior Corkscrew condition. 
\end{definition}

We now define the $\beta$ numbers under consideration.
\begin{definition}[Beta numbers $\beta_E$.]
    
\begin{equation}
\beta_E(Q)
:=
\inf_{P}
\sup_{x \in E \cap B_Q}
\frac{\operatorname{dist}(x,P)}{\ell(Q)},
\label{beta}
\end{equation}
where the infimum is taken over all \(n-1\)-planes \(P \subset \mathbb{R}^{n}\).
\end{definition}

\medskip

We define the bilateral \(\beta\)-numbers by
\begin{definition}[Bilateral beta numbers  $b\beta_E$]
\begin{equation}
b\beta_E(Q)
:=
\inf_{P}
\left(
\sup_{x \in E \cap B_Q}
\frac{\operatorname{dist}(x,P)}{\ell(Q)}
+
\sup_{y \in P \cap B_Q}
\frac{\operatorname{dist}(y,E)}{\ell(Q)}
\right),
\label{bbeta}
\end{equation}
where again the infimum is taken over all \(n-1\)-planes \(P \subset \mathbb{R}^{n}\).
\end{definition}

\medskip
\begin{definition}
 (Carleson sets): We borrow the definition of the Carleson sets from \cite{DS2} (see Definition 1.69 of \cite{DS2}).
\end{definition}

Let $E := \partial \Omega$ be an $(n-1)$-dimensional Ahlfors-David regular set in $\mathbb{R}^n$. A Carleson set is a measurable subset $A$ of $E \times \mathbb{R}_+$ such that $\chi_A(x,t)\, dx\, \dfrac{dt}{t}$ is a Carleson measure on $E \times \mathbb{R}_+$ (when $dx$ denotes $\mathcal{H}^{n-1}|_E$). This means that there is a $C>0$ such that,
$$\int_0^R \int_{E \cap B(w,R)} \chi_A(x,t)\, dx\, \frac{dt}{t} \leq C R^{n-1}, \quad \text{for all } w \in E \text{ and } R>0.$$

\begin{definition} (Weak Geometric Lemma): Let $E$ be a $d$-dimensional regular set. Then $E$ satisfies the weak geometric lemma (WGL) if for each $\varepsilon>0$, the set
$$\{(x,t) \in E \times \mathbb{R}_+ : \beta_E(x,t) > \varepsilon\}$$
is a Carleson set.
\end{definition}

\begin{definition} (Bilateral Weak Geometric Lemma) An $(n-1)$-dimensional Ahlfors-David regular set $E \subset \mathbb{R}^n$ is said to satisfy the bilateral weak geometric lemma (BWGL) if for each $\varepsilon > 0$,
$$\{(x,t) : E \times \mathbb{R}_+,\ b\beta_E(x,t) > \varepsilon\}$$
is a Carleson set.
\end{definition}

\begin{definition}\label{defur}  (Uniform rectifiability): An $(n-1)$ dimensional Ahlfors-David regular set $E \subset \mathbb{R}^n$ is said to be uniformly rectifiable, if it has big pieces of Lipschitz images of $\mathbb{R}^{n-1}$. That means, there exist $\theta, M>0$ such that for each $q \in E$, and $0 < r < \operatorname{diam}(E)$, there is a Lipschitz mapping $\rho : B_{n-1}(0,r) \to \mathbb{R}^n$ so that $\rho$ has Lipschitz norm $\leq M$, and
$$\mathcal{H}^{n-1}\big(E \cap B(q,r) \cap \rho(B_{n-1}(0,r))\big) \geq \theta r^{n-1}.$$
Here, $B_{n-1}(0,r)$ denotes a ball with center $0$, and radius $r$, in $\mathbb{R}^{n-1}$.
\end{definition}

\bigskip

Now we note that the definition for the weak geometric lemma and Bilateral weak geometric lemma, can be reduced to the following.

\begin{definition}[ Weak Geometric Lemma] An $(n-1)$ dimensional Ahlfors-David regular set $E \subset \mathbb{R}^n$ is said to satisfy the weak geometric lemma (WGL) if there exists some $\varepsilon_0 > 0$, so that for each $\varepsilon \leq \varepsilon_0$, the set
$$\{(x,t) \in E \times \mathbb{R}_+ : \beta_E(x,t) > \varepsilon\} \text{ is a Carleson set.}$$
\end{definition}

\begin{definition}[Bilateral Weak Geometric Lemma]
Similarly, $E$ satisfies the BWGL if there exists some $\varepsilon_0 > 0$, so that for each $\varepsilon \leq \varepsilon_0$, the set $\{(x,t) \in E \times \mathbb{R}_+ : b\beta_E(x,t) > \varepsilon\}$ is a Carleson set.
\end{definition}

\bigskip

These two statements follow, by noting that for any $\varepsilon_1 < \varepsilon_2$, we have
$$\{(x,t) \in E \times \mathbb{R}_+ : \beta_E(x,t) > \varepsilon_2\} \subset \{(x,t) \in E \times \mathbb{R}_+ : \beta_E(x,t) > \varepsilon_1\}$$
and thus if there exists such an $\varepsilon_0$ with the property that $\{(x,t) \in E \times \mathbb{R}_+ : \beta_E(x,t) > \varepsilon\}$ is a Carleson set when $\varepsilon \leq \varepsilon_0$, we also have the Carleson set property for any $1 > \varepsilon \geq \varepsilon_0$.

The same reasoning applies for the Bilateral Weak Geometric lemma (BWGL) as well.

 By a Euclidean cube $C(x,t)$ we mean the cube centered on $x$ with axes parallel to the coordinate axes, and so that the longest diagonal of this cube has length $2t$. 
 
 By covering a Euclidean cube $C(x,t)$ with the smallest ball $B(x,t)$ containing it and with the same center $x$, and conversely also considering the smallest cube $C(x,\sqrt{n}t)$ containing the ball $B(x,t)$ with the same center $x$, using the Ahlfors-David regular property for the case of the euclidean balls, we get that the Ahlfors-David regular property for the boundary is equivalent to the following stated in terms of Euclidean cubes.
 
 \begin{definition}\label{def13}(Ahlfors-David regular property for Euclidean cubes)
     The boundary $\partial\Omega$ is called Ahlfors-David $(n-1)$-regular, ($(n-1),C_1$ AD-regular) if
$\sigma = \mathcal{H}^{n-1}\!\!\mid_{\partial\Omega}$, then
\begin{equation}\label{adr}
C_{2}^{-1} r^{n-1} \le \sigma(C(x,r)) \le C_{2} r^{n-1}
\quad \text{for all } x \in \partial\Omega,\; 0 < r < \operatorname{diam}(\partial\Omega).\end{equation}
 \end{definition}

\textbf{Notation:} We use $\delta(x)$ to denote the distance from the boundary $\partial\Omega$. The cube $C(x,t)$ has center $x$ and has longest diameter equal to $2t$ and axes parallel to the coordinate axes.

In Case(ii) the proof of \cref{thm1}, we have used the notation $\vec{x}$ to denote the point $x$ as a vector. At other times we have used $x$ to denote the same point, and this will be clear from the context.  We also use the short form ADR to refer to either one of the inequalities in the Ahlford David regularity condition, that will be clear from the context. Throughout the proof of Case(ii), by the $(n-1)$ dimensional volume of the cube $C(x,t)$, we mean the $(n-1)$ dimensional volume of the intersection of $C(x,t)$ with a specific $(n-1)$ dimensional hyperplane containing $x$. We use $\alpha=\alpha(M)$ and $j=j(M)$ and it will always be understood that $\alpha,j$ depnds on the parameter $M$ that arises from the Harnack chain condition.

\section{Outline of the argument:} 

Assume to the contrary, there is a uniform domain $\Omega \subset \mathbb{R}^n$ with codimension one Ahlfors-David regular boundary $E = \partial \Omega$, so that $E$ fails the BWGL.Then there is a decreasing sequence of $\eps_i|_{i=0}^\infty$, $\eps_i \to 0$ as $i\to \infty$, with $\eps_0$ arbitrarily small. Thus the set
$$S = \left\{ (x,t) \in E \times \mathbb{R}_+ : b\beta_E(x,t) > \varepsilon_0 \right\}$$
fails to be a Carleson set, and so for any $C>0$ arbitrarily large, there exists $\omega \in E$, $R>0$, so that
$$\int_0^R \int_{B(\omega,R)} \chi_{S_i}(x,t)\, dx\, \frac{dt}{t} > CR^{n-1}.$$

Assume that $\beta_E(x,t) < \dfrac{\varepsilon_0}{4}$ for any pair $(x,t)\in S_i$ above.

Then there exists a plane $P_{(x,t)}$ so that
\begin{align}\sup_{y \in B(x,t) \cap E} \frac{d(y,P)}{t} < \frac{\varepsilon_0}{4} \qquad \end{align}

We also have, since $b\beta_E(x,t) > \varepsilon_0$, that,
\begin{align}\sup_{y \in B(x,t) \cap E} \frac{d(y,P)}{t} + \sup_{y \in B(x,t) \cap P} \frac{d(y,E)}{t} > \varepsilon_0 \qquad \end{align}

so, from $(1)$ and $(2)$ above we get
$$\sup_{y \in B(x,t) \cap P} \frac{d(y,E)}{t} > \frac{3\varepsilon_0}{4}>\frac{\eps_0}{2}.$$

There will thus be two possibilities: that there exists a ball $B(z,\eps_0 t/2)$ which belongs entirely either in the exterior of domain $\Omega^c$, or in interior of the domain $\Omega$. Using the interior corkscrew condition in the first case, and using the Harnack chain condition in the second case, we will then construct a counterexample to the ADR property for the cube $C(x,t)$. 

So, we must have, $\beta_E (x,t)\geq \eps_0/4$.

So, $t :=\{ (x,t)\in E\times R_{+}: \beta_E (x,t)\geq \eps_0 /4 \}\supset \{ (x,t)\in E\times R_{+}: b\beta_E (x,t)>\eps_0  \}$ fails to be a Carleson set. This argument is the same for each $\eps_i|_{i=0}^{\infty}$ and that concludes the argument.

\section{Proof of \cref{thm1}.}

\begin{proof}[Proof of \cref{thm1}] We prove the result by contradiction. Assume that the BWGL fails, for $E := \partial \Omega$.

Since BWGL fails, there exists a sequence $\eps_i|_{i=0}^{\infty}$ with  $\varepsilon_i \to 0$ so that the set $S_i = \{(x,t) \in E \times \mathbb{R}_+ : b\beta_E(x,t) > \varepsilon_i\}$ fails to be a Carleson set.

Let us fix an $\varepsilon_0$, to be chosen later, small enough. Thus the set $S_0 = \{(x,t) \in E \times \mathbb{R}_+ : b\beta_E(x,t) > \varepsilon_0\}$ fails to be a Carleson set, and so for any $C>0$ large enough there exists $w \in E$, $R>0$, so that
$$\int_0^R \int_{B(w,R)} \chi_{S_0}(x,t)\, dx\, \frac{dt}{t} > CR^{n-1}.$$

For any $(x,t) \in S_0$, consider the ball $B(x,t)$. Recall that the cube $C(x,t)$ has center $x$ and longest diameter equal to $2t$ and axes parallel to the coordinate axes.

Thus we have,
\begin{align}\label{eq9}
b\beta_E(x,t) = \inf_P \left\{ \sup_{y \in B(x,t) \cap E} \frac{d(y,P)}{t} + \sup_{y \in B(x,t) \cap P} \frac{d(y,E)}{t} \right\} > \varepsilon_0. \qquad \end{align}

\begin{figure}[h!]
\centering
\includegraphics[width=0.55\textwidth]{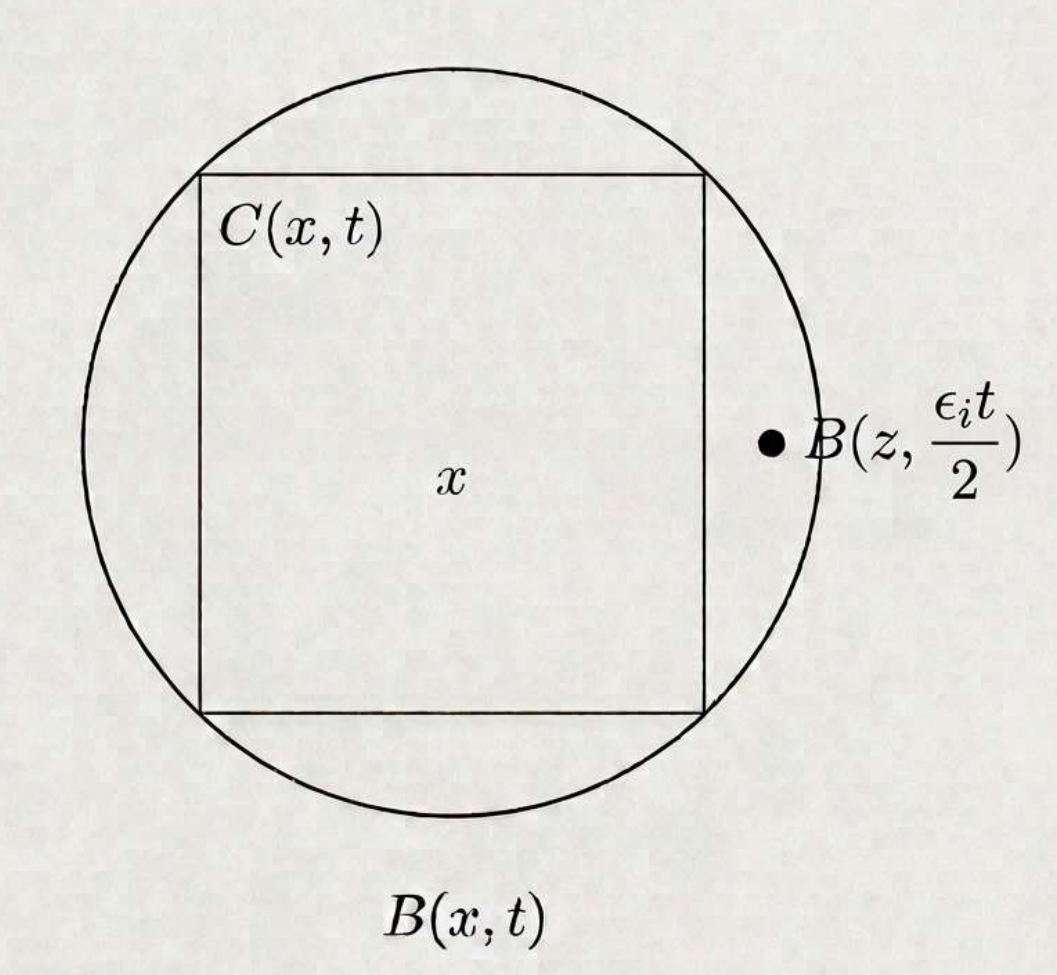}
\caption{The set up for the pair $(x,t)\in E\times \R_+$, where $\beta_E (x,t)\leq \eps_0 /2$ and where $b\beta_E(x,t)>\eps_0$. The cube $C(x,t)$ is the Euclidean cube centered on $x$, of maximal length, that is fully contained inside the ball $B(x,t)$. Due to the fact that $\beta_E (x,t)\leq \eps_0 /2$, we have a plane $P_{(x,t)}$ so that all points in $E\cap B(x,t)$ are $\eps_0 /2$ close to the plane $P_{(x,t)}$, and further due to the fact that $b\beta_E(x,t)>\eps_0$,  we also find a point $z\in B(x,t)\cap P_{(x,t)}$ so that $B(z,\frac{\eps_0 t}{2})\cap E=\phi$. Note that in general we have $B(z,\frac{\eps_0 t}{2})\not\subset B(x,t)$.}
\label{WGL}
\end{figure}

Assume that we have
\begin{align}\label{eq11}
    \beta_E(x,t) < \varepsilon_0/4.
\end{align}

So we have some plane $P_{(x,t)}$ so that
\begin{align}\label{eq10}
\sup_{y \in B(x,t) \cap E} \frac{d(y,P_{(x,t)})}{t} < \frac{\varepsilon_0}{4}. \qquad\end{align}

From \cref{eq9,eq10} above, since $(x,t) \in S_i$, we must have 
\begin{align}\sup_{y \in B(x,t) \cap P_{(x,t)}} \frac{d(y,E)}{t} > \frac{3\eps_0}{4}.\end{align}

Thus we have some point $z \in B(x,t) \cap P_{(x,t)}$ so that $d(z,E) > \dfrac{3\varepsilon_0 t}{4}>\dfrac{\varepsilon_0 t}{2}$. Thus there does not exist any point of $E$ belonging to $B\left(z, \dfrac{3\varepsilon_0 t}{4}\right)$. Now consider the ``slab" which is
\begin{align}\label{slab}
S_{x,t}:=\left\{ y \in C(x,t) : d(y,P_{(x,t)}) \leq \frac{\varepsilon_0 t}{4} \right\}.
\end{align}

By the standing assumptions, and the fact that no point of $E$ belongs to $B\left(z, \dfrac{3\varepsilon_0 t}{4}\right)$ and thus also no point belongs to $B\left(z, \dfrac{3\varepsilon_0 t}{4}\right)$, we get that
$$E \cap C(x,t) \subset \left\{ y \in C(x,t) : d(y,P_{(x,t)}) \leq \frac{\varepsilon_0 t}{4} \right\} \setminus B\left(z, \frac{\varepsilon_0 t}{2}\right). \qquad $$

Here, we have two possibilities:
\begin{enumerate}
\item[(1)] $B\left(z, \dfrac{\varepsilon_0 t}{2}\right) \subset \operatorname{ext}(\Omega)$
\item[(2)] $B\left(z, \dfrac{\varepsilon_0 t}{2}\right) \subset \operatorname{int}(\Omega)$.
\end{enumerate}

Note that we have chosen the values of $\eps_0 t/4$ for the width of the slab $S_{x,t}$ parallel to the plane $P_{}$ and $\eps_0 t/2$ for the radius of the ball $B\left(y, \dfrac{\varepsilon_0 t}{2}\right)$ with $y\in P_{(x,t)}$, so that the closure of the ball is not contained in the slab $S_{x,t}$, using the fact that no point of $E = \partial \Omega$ belongs to $B\left(y, \dfrac{\varepsilon_0 t}{2}\right)$, so that we precisely have the two possibilities as above; that the ball $B\left(y, \dfrac{\varepsilon_0 t}{2}\right)$ is contained entirely in either one of $\operatorname{ext}(\Omega)$ or $\operatorname{int}(\Omega)$.

\begin{enumerate}
    \item  We get in this case, that the complement of the slab $S_{x,t}=\{y \in C(x,t) : d(y,P_{(x,t)}) \leq \frac{\varepsilon_0 t}{4}\}$ within $C(x,t)$, also entirely belongs to $\operatorname{ext}(\Omega)$. This is seen by connecting any point of this aforementioned complement set
$$S^{c}_{x,t}=\{y \in C(x,t) : d(y,P_{(x,t)}) > \frac{\varepsilon_0 t}{4}\}$$
with $z$, by a continuous path that avoids the region $S_{x,t}\setminus B(z, \eps_0 t/2)$. Since this path cannot intersect $\partial \Omega$, we get that the entire path belongs to $\text{ext}( \Omega)$. Now since $x \in E$, consider the ball $B\left(x, \dfrac{\sqrt{\varepsilon_0} t}{4}\right)$. Since the domain $\Omega$ has interior corkscrews, we must have some interior ball
$$B\left(A\left(x, \frac{\sqrt{\varepsilon_0} t}{4}\right), \frac{c_0 \sqrt{\varepsilon_0} t}{4}\right) \subset B\left(x, \frac{\sqrt{\varepsilon_0} t}{2}\right)\cap \text{int}(\Omega)$$

If we choose $\varepsilon_0$ small enough so that
$$\frac{c_0 \sqrt{\varepsilon_0} t}{4} > \frac{\varepsilon_0 t}{2} \iff \varepsilon_0 < \left(\frac{c_0}{2}\right)^2,$$
then we cannot have the ball $B\left(A\left(x, \dfrac{\sqrt{\varepsilon_0} t}{2}\right)\right)$ contained in the slab $S_{x,t}=\{y \in C(x,t) : d(y,P_{(x,t)} \leq \frac{\varepsilon_0 t}{4}\}$. Here $c_0$ is the uniform constant arising from the interior corkscrew condition on the set $\Omega$. Thus we have $\text{int}(\Omega)$ intersects $S^{c}_{x,t}$.  This is a contradiction to the fact that $S^{c}_{x,t}$ belongs entirely to $\text{ext}(\Omega)$.

\item Now consider the case where $B(z, \frac{\varepsilon_0 t}{2})$ belongs entirely in $\text{int}(\Omega)$. Now we will carefully use the uniformity of the domain $\Omega$, along with the Ahlfors-David regularity of $E$ which is now contained entirely inside the slab 
$$S_{x,t}=\left\{ y \in C(x, t) : d(y, P_{(x, t)} )\le \frac{\varepsilon_0 t}{4} \right\}$$
defined earlier,  to again gain a contradiction.

We consider the $(n-1)$ dimensional hyperplane section $$\bar{P}(x, t) = P_{(x,t)} \cap B(x,t),$$ and consider a dyadic decomposition of this hyperplane. In the $j$-th stage of the dissection, we have $2^{j(n-1)}$ many disjoint congruent cubes each of side length $t/2^j$, so that their disjoint union is $\bar{P}(x, t)$. Call this set of cubes at this $j$-th dissection as $\mathcal{C}_j$, and so for each $j \ge 0$ we can write $\bar{P}(x, t)$ as a disjoint union of these cubes:
$$\bar{P}(x, t) = \bigcup_{C \in \mathcal{C}_j} C, \quad \text{for each } j \ge 0.$$

Now we use the uniformity of $\Omega$. We will choose $\varepsilon_0$ arbitrarily small in comparison to the Harnack chain parameters as well as the Ahlfors-David parameter, so that the slab is sufficiently thin for our purposes, in the entirety of the argument.





Since the assumption is that $\beta_E (x,t)< \eps_0 /4$, since $C(x, t) \subset B(x,t)$,
$$E \cap \left\{ y \in C(x, t) : d(y, P(x,t)) > \frac{\varepsilon_0 t}{4} \right\} = \phi.$$

We orient the axes so that $P_{(x,t)} = \{ (x_1, \dots, x_n) : x_n = 0 \}$, which is the $(n-1)$ dimensional hyperplane $x_n = 0$.

Now, consider the two points $\vec{x} + \alpha t \hat{e}_n$ and $\vec{x} - \alpha t \hat{e}_n$, for an $\alpha$ chosen small enough depending on the constants in the Harnack chain condition. Consider the Harnack chain joining these two points. The $\alpha$ purely depends on the Harnack chain constants so that the ensuing argument is satisfied, and we will inductively use this argument $N_0$ times, and then choose $\eps_0$ small enough in the end.

We choose $\alpha=\alpha(M)$ depending on $M$ to guarantee the existence of a ball $B(y, c t)$ with $y \in \bar{P}(x, t) \cap C\left(x, \frac{t}{2}\right)$ so that $E \cap B(y, c t) = \phi$, so that the Harnack path joining $\vec{x} + \alpha t x_n \hat{e}_n$ and $\vec{x} - \alpha t x_n \hat{e}_n$  intersects $B(y, c t)$ and  that this Harnack path remains quantifiably far from $E$ when it intersects $\overline{P}(x,t)$.  We have $|(\vec{x} + \alpha t \hat{e}_n) - (\vec{x} - \alpha t \hat{e}_n)| = 2 \alpha t$, $\delta_\Omega(\vec{x} + \alpha t \hat{e}_n) = \alpha t = \delta_\Omega(\vec{x} - \alpha t \hat{e}_n)$, where $\delta_\Omega (y)$ denotes the distance from the boundary $E$, of the point $y$. Then recalling the definition of the Harnack paths, we have that when this Harnack path joining $\vec{x} + \alpha t \hat{e}_n$ and $\vec{x} - \alpha t  \hat{e}_n$ intersects $\overline{P}(x,t)$, without loss of generality at the point $y$, and we get the ball $B(y,ct)$ with $B(y,ct)\cap E= 0$.

More precisely, we have $\Pi = 2$, and thus the admissible number of balls in the Harnack chain condition is upper bounded by $M \left(2 + \log_2^+ 2\right) = 3M$. Thus if we choose $\alpha$ small enough in comparison to $M$ which ensures that the Harnack path above has to be contained within $C(x,t/2)$, and then choose $\eps_0 \ll  \alpha(M). C(C_1,n,M)$ with a constant $C(C_1,n,M)=1/2^{N_0(C_1,n,M)(j(M)+1)}$ as given by \cref{epschoice},\footnote{ where the iteration runs $N_0 $ times.} and the ball of radius $c t$ with $c$ dependent on $M$, so that $E \cap B(y, c t) = \phi$ and $\eps_0 \ll c<_M \alpha$ where the rightmost inequality here follows from the Harnack inequality. 


Note also that we have also been able to choose $y \in C\left(x, \frac{t}{2}\right)$ with a crude factor of $\frac{1}{2}t$, so that $y$ stays a bounded distance away from the boundary of $C(x,t)$. We wish to extract a subcube of $C(x,t)$, centered on $\overline{P}(x,t)$ that does not contain any point of $E$. The Harnack chain condition only guarantees that there is a subball contained in $B_1 (x,t)$ centered on the same $x$, with $ B_1(x,t)\subset B(x,t)$, where the center of this subcube will lie within this $B_1(x,t)$. If $\alpha$ is large enough dependent on $M$ so that $B_1(x,t)
\setminus C(x,t)\neq \phi$, then in general we can't extract the required subcube of $C(x,t)$ which might now lie entirely outside $C(x,t)$. Taking the crude factor of $\frac{1}{2}t$ , requiring $\alpha$ small enough depending on $M$, we are able to extract the subcube from $C(x,\frac{1}{2}t)$.

Now consider the smallest $j = j(M) > 1$ so that $\frac{1}{2^j} < c$. Then consider the disjoint union of $\bar{P}(x, t)$ as $\bar{P}(x, t) = \bigcup_{C \in \mathcal{C}_{j(M)+1}} C$, so that there exists a cube $C(z, \frac{t}{2^{j+1}})$ so that $C(z, \frac{t}{2^{j+1}}) \subset B(y,c t)$ where $B(y,ct)$ is the ball considered above and where $B(y,ct)\cap E=\phi$. and where $z$ is the center of the cube $C(z, \frac{t}{2^{j+1}})$. We have the initial family $\mathcal{D}_0 = \left\{ C\left(z, \frac{t}{2^{j+1}}\right) \right\}$ which consists of just the single cube $C\left(z, \frac{t}{2^{j+1}}\right)$.

Now consider the collection of the cubes $\mathcal{F}_0 = \mathcal{C}_{j+1} \setminus C(z, \frac{t}{2^{j+1}})$, which is a disjoint covering of the plane $\bar{P}(x, t)$ excluding the specific cube $C(z, \frac{t}{2^{j+1}})$. Thus, the collection of cubes $\mathcal{C}_{j+1} \setminus C(z, \frac{t}{2^{j+1}})$ covers an $(n-1)$ dimensional volume $c_1 ( t^{n-1} - \left(\frac{t}{2^{j+1}}\right)^{n-1} )=c_1 t^{n-1} \left(1 - \frac{1}{2^{(j+1)(n-1)}}\right)$.

For each $C(z_1, \frac{t}{2^{j+1}}) \in \mathcal{C}_{j+1} \setminus C(z, \frac{t}{2^{j+1}})$, we consider the two points $\vec{z}_1 + \alpha \frac{t}{2^{j+1}} \hat{e}_n$ and $\vec{z}_1 - \alpha \frac{t}{2^{j+1}} \hat{e}_n$ and the Harnack chain connecting these two points. By an argument identical to the one prior, we get that there exists a ball $B\left(y_1, c \frac{t}{2^{j+1}}\right)$ with $y_1 \in C\left(z_1, \frac{t}{2 \cdot 2^{j+1}}\right)\cap \overline{P}(x,t)$, and so that there exists a cube $C\left(\bar{z}_1, \frac{1}{2^{j+1}} \cdot \frac{t}{2^{j+1}}\right) = C\left(\bar{z}_1, \frac{t}{2^{2j+2}}\right)$ with $C\left(\bar{z}_1, \frac{t}{2^{2j+2}}\right) \subset B\left(y_1, c \frac{t}{2^{j+1}}\right)$, and so that $B\left(y_1, c \frac{t}{2^{j+1}}\right) \cap E = \phi$, and consequently $C\left(\bar{z}_1, \frac{t}{2^{2j+2}}\right) \cap E = \phi$.

We repeat this argument of this previous paragraph for each cube belonging to $\mathcal{C}_{j+1} \setminus C\left(z, \frac{t}{2^{j+1}}\right)$.

Thus we have a set of cubes $\mathcal{D}_1 = \left\{ C\left(z_m, \frac{t}{2^{2j+2}}\right) \right\}$ where each $C\left(z_m, \frac{t}{2^{2j+2}}\right)$ is contained in a unique element of $\mathcal{F}_0 = \mathcal{C}_{j+1} \setminus C\left(z, \frac{t}{2^{j+1}}\right)$. 

Now consider the collection of cubes of radius $\frac{1}{2^{2j+2}}$, given by 
\begin{align}
\mathcal{F}_1 = \mathcal{C}_{2j+2} \setminus \left( \left\{ \bigcup_{D \in \mathcal{D}_1} D \right\} \cup \left\{ \bigcup_{\substack{G \in \mathcal{C}_{2j+2} \\ G \subset \mathcal{D}_0}} G \right\} \right)
\end{align}
Here, by abuse of notation, when we write $G\subset \D_0$, we mean that $G\subset \cup_{D\in \D_0} D$, and subsequently we define the same inductively for each $\D_k$ for $k\geq 1$.

In other words, $\mathcal{F}_1$ is the set of cubes of $\mathcal{C}_{2j+2}$, from which we exclude all the cubes of length $\frac{t}{2^{2j+2}}$ that were extracted in the previous step, and we also exclude the union of cubes in $\mathcal{C}_{2j+2}$ that are contained in the union of the cubes in $\mathcal{D}_0$.

Then we see that the disjoint union of cubes in $\mathcal{F}_1$ has the total volume 
\begin{multline}\label{eq15}
c_1 \Big( t^{n-1} - \left(\frac{t}{2^{j+1}}\right)^{n-1} \Big) \left(1 - \left(\frac{1}{2^{j+1}}\right)^{n-1}\right)  \\= c_1 t^{n-1} \left(1 - \frac{1}{2^{(j+1)(n-1)}}\right)^2 =|C(x, t)| \left(1 - \frac{1}{2^{(j+1)(n-1)}}\right)^2.
\end{multline}
This follows by noting that the cubes of $\mathcal{F}_1$ consists of the all cubes in $\mathcal{C}_{2j+2}$ , first excluding one cube from the generation $\D_0$ of volume $c_1 (t/2^{j+1})^{n-1}$after which a set of  volume of $c_1 t^{n-1} \left(1 - \frac{1}{2^{(j+1)(n-1)}}\right)$ remains. This set is now split up into $2^{(j+1)(n-1)}-1$ congruent disjoint cubes,  each of volume $c_1 (t/2^{j+1})^{n-1}$,  belonging to $\mathcal{C}_{j+1}$. In the process described above, from each one of these ($2^{(j+1)(n-1)}-1$ many ) cubes, we extract a subcube belonging to $\mathcal{C}_{2j+2}$ of volume $c_1 (t/2^{2j+2})^{n-1}$ and thus a fraction $\Big(1-\frac{1}{2^{(j+1)(n-1)}}\Big)$ of the volume of each one of these cubes survives, and finally the union of cubes in $\mathcal{F}_1$ has volume given in \cref{eq15}.

Now, we repeat this process, for each cube $C\left(z_{11}, \frac{t}{2^{2j+2}}\right) \in \mathcal{F}_1$, considering the two points $\vec{z}_{11} + \alpha \frac{t}{2^{2j+2}} \hat{e}_n$ and $\vec{z}_{11} - \alpha \frac{t}{2^{2j+2}} \hat{e}_n$ and the Harnack chain connecting them. Note again that we have chosen $z_{11}\in \overline{P}(x,t)$. This gives us a ball $B\left(y_{11}, c \frac{t}{2^{2j+2}}\right)$ with $y_{11} \in C\left(z_{11}, \frac{t}{2 \cdot 2^{2j+2}}\right)$, and so that there exists a cube $C\left(\bar{z}_{11}, \frac{t}{2^{3j+3}}\right)$ so that $C\left(\bar{z}_{11}, \frac{t}{2^{3j+3}}\right) \cap E = \phi$, and $C\left(\bar{z}_{11}, \frac{t}{2^{3j+3}}\right) \in \mathcal{C}_{3j+3}$.

Repeating this process for all the cubes in $\mathcal{F}_1$, we extract a set of cubes 
\begin{align}\mathcal{F}_2 = \mathcal{C}_{3j+3} \setminus \left( \left\{ \bigcup_{D \in \mathcal{D}_2} D \right\} \cup \left\{ \bigcup_{\substack{G \in \mathcal{C}_{3j+3} \\ G \subset \mathcal{D}_1}} G \right\} \cup \left\{ \bigcup_{\substack{J \in \mathcal{C}_{3j+3} \\ J \subset \mathcal{D}_0}} J \right\} \right)\end{align}
where $\mathcal{D}_2$ is the set of cubes $\mathcal{D}_2 = \left\{ C\left(\bar{z}_m, \frac{t}{2^{3j+3}}\right) \right\}$ where each such cube belonging to $\mathcal{D}_2$ is extracted from a unique element of $\mathcal{F}_1$.

Again, the disjoint union of the set of cubes in $\mathcal{F}_2$ have the total volume $|C(x, t)| \left(1 - \frac{1}{2^{(j+1)(n-1)}}\right)^3$.

We thus inductively define the collections of cubes $\mathcal{D}_k, \mathcal{F}_k$ for each $k \ge 3$ as well, and the set of cubes in each $\mathcal{F}_k$ has total volume $|C(x, t)| \left(1 - \frac{1}{2^{(j+1)(n-1)}}\right)^{k+1}$.

We continue this process inductively, and if we take $k$ large enough, we have a contradiction to the Ahlfors regularity lower bound for the surface measure contained in $C(x, t)$, as argued below.

This total volume of $\mathcal{F}_k$ is distributed among a disjoint set of $(2^{(j+1)(n-1)}-1)^{k}$ many cubes each of length $\frac{t}{2^{(j+1)k}}$, and using \cref{def13} on each one these disjoint cubes, and then adding the contribution, we get, 
$$\mathcal{H}^{n-1} \left( E \cap C(x, t) \right) \le C_1 |C(x, t)\cap \overline{P}(x,t)| \left(1 - \frac{1}{2^{(j+1)(n-1)}}\right)^{k+1}.$$

On the other hand, we also have due to Ahlfors-David regularity for the cube $C(x, t)$, that
$$\mathcal{H}^{n-1} \left( E \cap C(x, t) \right) \ge C_1^{-1} |C(x, t)\cap \overline{P}(x,t)|,$$
where $|C(x, t)|=c_1  t^{n-1}$ is the $(n-1)$ dimensional volume of the cube $C(x, t) \subset P_{(x,t)}$. Noting that the fixed $j=j(M)$ being used here is dependent on the Harnack chain constants, we will get a contradiction when
\begin{align}C_1 \left(1 - \frac{1}{2^{(j(M)+1)(n-1)}}\right)^{k+1} \le C_1^{-1}, \quad \text{so,} \quad \left(1 - \frac{1}{2^{(j(M)+1)(n-1)}}\right)^{k+1} \le C_1^{-2}.\end{align}

Thus, we can choose the minimal possible $k:=N_0$, dependent on $C_1, n,j(M)$ (recall that $j(M)$ is an exponent depending on the Harnack chain constants), so that the above inequality holds.

Thus the final set of cubes that have been considered, belong to $\mathcal{C}_{k(j+1)}$, whose cubes have length $\frac{t}{2^{N_0(j(M)+1)}}$. Thus finally we choose the 
\begin{align}\label{epschoice}
\varepsilon_0  \ll \alpha(M)\frac{1}{2^{(N_0(C_1,n,M))(j(M)+1)}}
\end{align} 

(where in turn as mentioned above, $N_0$ depends on $C_1, n, j(M)$,) so that at each stage, the Harnack chain condition allows us to choose the cubes centered on $\overline{P}(x,t)$ that belong to $\mathcal{D}_p$, for each $1 \le p \le N_0$, since the width $\frac{\varepsilon_0 t}{4} \ll \frac{\alpha}{2^{N_0(j(M)+1)}}t< \frac{\alpha}{2^{l(j(M)+1)}}t$ for any $1 \le l \le N_0$.
\bigskip

Thus we have a contradiction to the Ahlfors regularity for the cube $C(x, t)$ in Case(ii), and thus we must have that the pair $(x, t) \in \left\{ (y, s) : \beta_E(y, s) > \frac{\varepsilon_0}{4} \right\}$. 

\end{enumerate}

We also had a contradiction in Case(i).

Therefore the failure of BWGL implies that given the $\eps_0$, for any $C_0$ we also have a $w \in E$ and $R > 0$, so that
$$\int_0^R \int_{B(w, R)} \chi_{T}(x, t) \, dx \, \frac{dt}{t} > C_0 R^{n-1}, \quad \text{where } T = \left\{ (x, t) : \beta_E(x, t) > \frac{\varepsilon_0}{4} \right\}.$$


Thus, we cannot have a set $E = \partial\Omega$, which satisfies the ADR property, where $\Omega$ is a uniform domain, where $\partial\Omega$ fails the BWGL property, and for which the $\frac{\varepsilon_0}{4}$ WGL condition holds. 

Taking the sequence $\eps_i\to 0$ and repeating the argument, with each $\eps_i \leq \eps_0$, this concludes the proof of the theorem by contradiction.\end{proof}

\begin{remark}
Note that if our domain also had the corkscrews for the exterior domain $\Omega^c$ as well (which is precisely the interior corkscrew condition for the domain $\Omega^c$), the analysis for the situation in Case(ii) would also reduce exactly to the proof used in Case(i). In fact, the result of \cite{DJ90} shows the uniform rectifiability of a codimension one ADR boundary which contains both interior and exterior corkscrews . On the other hand, in \cite{AHMNT14} it is shown that if the codimension $1$ boundary of a uniform boundary is uniformly rectifiable (which already means it satisfies the ADR property), then the domain contains exterior corkscrews as well.

\end{remark}

\begin{remark}
    In each of the $N_0$ steps, because of the Harnack chain condition, we have been able to extract a cube centered on the hyperplane section $\overline{P}(x,t)$, which does not intersect $E$, and where the radius of the extracted cube at each of these $N_0$ steps is comparable to $t$. This enabled us to get a contradiction to the ADR lower bound applied to the cube $C(x,t)$. Note, however, that if we considered the Harnack path joining points of the form $\vec{x}+\alpha_1 t \hat{e_n}$ for some $\alpha_1$ comparable to $\eps_0$, then we can only say using \cref{slab}  that there are some arbitrarily small subballs contained within the slab $S_{x,t}$, not necessarily centered on $\overline{P}(x,t)$, which do not contain points  of $E$. Then the contradiction to the ADR lower bound for the original cube $C(x,t)$ will not follow by the same argument.
\end{remark}

\section{Counterexample in absence of uniformity of domain, from \cite{DS91}.}

 Note that the condition $\{b\beta_{E}(x,t)>\eps\}$ in conjunction with the condition $\{\beta_{E}(x,t)<\eps/4\}$, for any pairs of values belonging in $X\times T$, for a sequence $T= \{t_i \}|_{i=1}^\infty\ \text{with } t_i \to 0$ as $i\to \infty$, $t_{i}/t_{i+1}\to \infty \text{ as } i\to \infty$, and any bounded sequence of  points $X=\{x_i:x_i\in B(q,r_0)\}$ for some fixed $r_0$ and some fixed $q\in E$ , in a manner that ensures the failure of the BWGL Carleson packing condition, forces us to find ``holes" in the boundary ball $\partial\Omega\cap E$, but at radial scales whose successive ratios are arbitrarily large as taken above. 
 
 One sees that this alone cannot force the failure of the ADR property for the cubes  $C(x, t)$ in the manner of the argument of the proof of \cref{thm1}. In particular, in the proof of \cref{thm1},  for any pair $(x,t)$ with the conditions that $\{b\beta_{E}(x,t)>\eps\}$ and $\{\beta_{E}(x,t)<\eps/4\}$, in case (ii), we find holes in the boundary layer for a set of radial scales whose successive ratios are fixed, with each of the radial values less than and comparable to $t$,  that leads to the failure of the ADR condition for the cube $C(x,t)$. 

 Indeed, one can still have a codimension one ADR boundary of a domain that is not uniform, with the failure of the BWGL condition, while the WGL property is still preserved for this boundary.

One is referred to the counterexample in Section 20 of \cite{DS91} for an example of a domain that is not uniform, and whose boundary satisfies the WGL property, but still fails the BWGL property, where the boundary is $1$-dimensional Ahlfors-David regular in $\R^2$. 

In \cite{DS2, DS91} it is also shown that the Weak Geometric Lemma with a big projections condition implies the Bilateral Weak Geometric Lemma for a codimension one set $E\subset \R^n$, with no uniformity required for the domain $\R^n \setminus E$ .

\section{Acknowledgements}
The author is grateful for the support of the School of Mathematics, Tata Institute of Fundamental Research in Mumbai, where this work was completed. The figure in this manuscript was generated using Google Gemini's Large Language Model.


\bigskip

Email: ap7mx@missouri.edu.

\end{document}